\theoremstyle{plain}   
\newtheorem*{th_CY}{Theorem CY}
\newtheorem{theorem}{Theorem}[section]   
\newtheorem{lemma}[theorem]{Lemma}         
\theoremstyle{definition}
\theoremstyle{remark}
\newtheorem{remark}[theorem]{Remark}        
\numberwithin{equation}{section}
\newcommand{\R}{{\mathbb R}}
\newcommand{\conv}{\operatorname{conv}}
\newcommand{\vf}{\varphi}
\newcommand{\diam}{\operatorname{diam}}
\begin{document}

\title{A note on propagation of singularities of semiconcave functions of two variables}

\thanks{The research was supported by the grant MSM 0021620839 from
 the Czech Ministry of Education and by the grant
GA\v CR 201/09/0067.}

\author{Lud\v ek Zaj\'\i\v{c}ek}

\subjclass[2000]{Primary: 26B25; Secondary: 35A21.}

\keywords{semiconcave functions, singularities}

\email{zajicek@karlin.mff.cuni.cz}

\address{Charles University,
Faculty of Mathematics and Physics,
Sokolovsk\'a 83,
186 75 Praha 8-Karl\'\i n,
Czech Republic}


\begin{abstract} 
P. Albano and P. Cannarsa proved in 1999 that, under some applicable conditions, singularities of semiconcave functions in $\R^n$ propagate along Lipschitz arcs. Further regularity properties of these arcs were proved by P. Cannarsa and Y. Yu in 2009.
We prove that, for $n=2$, these arcs are very regular: they can be found in the form (in a suitable Cartesian coordinate system) $\psi(x) = (x, y_1(x)-y_2(x)), \ x \in [0,\alpha]$, where $y_1$, $y_2$ are convex and Lipschitz on $[0,\alpha]$. In other words: singularities propagate along arcs with finite turn.
\end{abstract}

\maketitle

\markboth{L.~Zaj\'{\i}\v{c}ek}{Propagation of singularities}

\section{Introduction}

Let $u$ be a function defined on an open set $\Omega \subset \R^n$ which is locally (linearly) semiconcave; i.e.,
 $f$ is locally representable in the form  $u(x)= g(x) + K \|x\|^2$, where $g$ is concave (cf. \cite{CaSi}).
  
  Let $\Sigma(u)$ be the singular set of $u$, i.e.
  $$  \Sigma(u) = \{x \in \Omega:\ u \ \ \text{is not differentiable at}\ \ x\}.$$
  It is clear that in many questions concerning $\Sigma(u)$ we can suppose that $u$ is concave (or convex), since
   the results for semiconcave functions then easily follow. But it is reasonable to formulate theorems for semiconcave functions, since these functions are important in a number of applications (see \cite{CaSi}).
   
   It is well-known that $\Sigma(u)$ is a rather small set: it can be covered by countably many Lipschitz
    DC hypersurfaces (\cite{zajkonv}). (Note that for $A \subset \R^n$ there exists a convex (resp. semiconcave)
     function $u$ on $\R^n$ such that $A = \Sigma(u)$, if and only if $A$ is an $F_{\sigma}$ set which
can be covered by countably many Lipschitz   DC hypersurfaces, see \cite{Pa}.)

The set $\Sigma(u)$  can have isolated points, but P. Albano and P. Cannarsa \cite{AC} found applicable conditions ensuring
 that $\Sigma(u)$ is in a sense big in each neigbourhood of a given $x_0 \in \Sigma(u)$. (The results of  \cite{AC}
  can be found also in the book \cite{CaSi}.)
 In particular, they proved that if $\partial D^+ u(x_0) \setminus D^* u(x_0) \neq \emptyset$ (see Preliminaries for
  the definitions), then a Lipschitz arc $\xi: [0,\tau] \to \Omega$ emanating from $x_0$ is a subset of the singular set  $\Sigma(u)$. 
The results of \cite{AC} were refined in \cite{CY}; in particular it is proved in \cite[Corollary 4.3]{CY} that  $\xi$ 
 has nonzero (right continuous) right derivative at all points.
 
 The purpose of the present note is to show that in $\R^2$ the results of \cite{CY} and  methods from \cite{zajkonv}
  and \cite{VeZa} easily imply that the restriction of $\xi$ to an interval $[0,\tau']$ has an equivalent parametrization of the form (in a suitable Cartesian coordinate system) $\psi(x) = (x, y_1(x)-y_2(x)), \ x \in [0,\alpha]$, where $y_1$, $y_2$ are convex and Lipschitz on $[0,\alpha]$.  (This result is equivalent to the assertion that the  restriction of  $\xi$ to an interval $[0,\tau^*]$ has finite turn, cf. Remark \ref{2}).
   In particular, $\xi$ has (left continuous) left halftangents at all points. 
   
   The question whether the results  can be generalized to the case
   $n >2$ remains open.

\section{Preliminaries}

By $B(x,r)$ we denote the open ball with center $x$ and radius $r$. The scalar product of $v,w \in \R^n$ is denoted by
 $\langle v,w\rangle$. If $A \subset \R^n$, $c\in \R$ and $v \in \R^n$, then we define the sets $A+v$ and $c A$ by
  the usual way and similarly set $\langle v,A\rangle := \{\langle v,a\rangle:\ a \in A\}$. 
   The boundary and the convex hull of a set $A \subset \R^n$ are denoted by $\partial A$ and $\conv A$, respectively.
  The (Fr\' echet) derivative $Df(a)$ of
   a function $f$ on $\R^n$ at $a \in \R^n$ is considered as an element of $\R^n$. The one-sided derivatives of a real or vector function $\xi$ of one variable at $x \in \R$ are denoted by $\xi'_+(x)$ and $\xi'_-(x)$.

  If $f$ is a  function defined on a subset of $\R^n$, $x \in \R^n$ and $v\in \R^n$, then we define the one-sided
   directional derivative as  
 $$f'_+(x,v):=\lim_{h\to 0+}\frac{f(x+hv)-f(x)}{h}.$$
   
   Let $\Omega \subset \R^n$ be an open set and $u$ a locally semiconcave function on $\Omega$ (see Introduction).
    Then $u$ is locally Lipschitz and so differentiable a.e. in $\Omega$. For $x \in \Omega$, we define (see \cite{AC} or \cite[p. 54]{CaSi})
     the set
     $$ D^*u(x) = \{p \in \R^n:\ \Omega \ni x_i \to x,\, Du(x_i) \to p\}$$
     of all {\it reachable gradients} of $u$ at $x$ (note that $D^*u(x) $ is also called limiting subdifferential, cf. \cite[p. 725]{AC}).
     
     The {\it superdifferential} $D^+u(x)$ of $u$ at $x$ can be defined as the convex hull of $ D^*u(x)$ (see \cite[p. 723]{AC}, cf. \cite[Theorem 3.3.6]{CaSi}).
     
     Always  $D^*u(x) \subset \partial D^+u(x)$ (see \cite[Proposition 3.3.4]{CaSi}).
     Note that the superdifferential $D^+u(x)= \conv  D^*u(x)$ coincides with the Clarke's subdifferential $\partial^C u(x)$ (since $\partial^C u(x)= \conv  D^*u(x)$, see, e.g., \cite{Cl}).

Let $u(x)= g(x) + K \|x\|^2$, where $g$ is concave, on a ball $B(x_0,\delta) \subset \Omega$. Set $f:=-g$. Since $D(K\|x\|^2) = 2Kx$, we easily obtain that
$ D^*u(x_0)= -D^*f(x_0)+ 2Kx_0$, and therefore
\begin{equation}\label{redconv}
D^+u(x_0) = -\partial f(x_0) + 2Kx_0,
\end{equation}
where $\partial f$ is the classical subdifferential of the convex function $f$. 

Recall that a function defined on an open convex subset of $\R^n$ is a {\it DC function} if it is a difference of two convex functions.
 We will need the following simple lemma which is a special case of the ``mixing lemma'' \cite[Lemma 4.8]{VeZa}.
 \begin{lemma}\label{mix}
 Let $\vf_1,\dots,\vf_p$ be DC functions on $\R$,  and let $h$ be a continuous function on $\R$ such that
 $$h(x) \in \{\vf_1(x),\dots,\vf_p(x)\}\ \ \ \text{for each}\ \ \ x \in \R.$$
 Then $h$ is DC on $\R$.
 \end{lemma}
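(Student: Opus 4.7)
The plan is to prove the lemma by induction on $p$; the case $p=1$ is trivial. Since being DC on $\R$ is equivalent to being DC on every compact subinterval, I fix $[a,b]\subset\R$ and prove $h|_{[a,b]}$ is DC. Writing $\varphi_i = u_i-v_i$ on $[a,b]$ with $u_i,v_i$ convex and setting $V:=\sum_j v_j$, each $\varphi_i+V$ is a sum of convex functions, hence convex on $[a,b]$. Since $h$ is DC iff $h+V$ is DC, I may replace $\varphi_i$ by $\varphi_i+V$ and $h$ by $h+V$, and so assume that each $\varphi_i$ is convex on $[a,b]$.

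I first show $h$ is Lipschitz on $[a,b]$ with constant $L:=\max_i\Lip(\varphi_i|_{[a,b]})$. Suppose toward contradiction $h(y)-h(x)>L(y-x)$ for some $x<y$, and set $z:=\sup\{t\in[x,y]:h(t)-h(x)\leq L(t-x)\}$; by continuity $h(z)-h(x)=L(z-x)$ and $z<y$. Fix $i_0$ with $h(z)=\varphi_{i_0}(z)$. For each $j$ with $\varphi_j(z)\neq\varphi_{i_0}(z)$, continuity forces $h\neq\varphi_j$ on some right-neighborhood $(z,z+\delta_j)$ of $z$; for each $j$ with $\varphi_j(z)=\varphi_{i_0}(z)$, the $L$-Lipschitz bound on $\varphi_j$ yields $h(t)-h(z)\leq L(t-z)$ whenever $h(t)=\varphi_j(t)$. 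Since the index set is finite, there is a uniform right-neighborhood of $z$ on which $h(t)-h(z)\leq L(t-z)$, contradicting the definition of $z$. The symmetric inequality follows dually.

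For the inductive step, let $E:=\{t\in[a,b]:h(t)=\varphi_p(t)\}$, which is closed, and write $[a,b]\setminus E$ as a countable disjoint union of open intervals $I_k$. On each $I_k$, $h$ takes values pointwise in $\{\varphi_1,\ldots,\varphi_{p-1}\}$, so by the induction hypothesis (applied on each compact subinterval of $I_k$), $h|_{I_k}$ is DC on $I_k$; and on $E$, $h=\varphi_p$ is DC. To conclude that $h$ is DC on $[a,b]$, I invoke the standard characterization that a Lipschitz function on $[a,b]$ is DC iff its right derivative has finite total variation on $[a,b)$.

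The main obstacle is the final assembly: bounding $\V(h'_+,[a,b])$. The contributions from the open pieces $I_k$ are controlled by the induction. The delicate point is the sum, over possibly infinitely many $k$, of the \emph{junction jumps} of $h'_+$ at the endpoints of the $I_k$ (all lying in $E$, where $h=\varphi_p$). The key observation is that at such a boundary point $c$, each relevant one-sided derivative of $h$ is either $(\varphi_p)'_\pm(c)$ or $(\varphi_i)'_\pm(c)$ for some $i$ with $\varphi_i(c)=\varphi_p(c)$, so every junction jump is dominated by a one-sided jump of a \emph{fixed} DC function built from the finitely many differences $\varphi_i-\varphi_p$; the sum of one-sided jumps of any BV function is summable, so the total junction contribution is finite. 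This yields $\V(h'_+,[a,b])<\infty$, completing the induction.
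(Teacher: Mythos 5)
The paper itself gives no proof of this lemma: it is quoted as a special case of the mixing lemma of Vesel\'y and Zaj\'\i\v cek \cite[Lemma 4.8]{VeZa}, so there is no argument in the text to compare yours with step by step. Your preliminary reductions are correct: localizing to a compact interval $[a,b]$, adding $V=\sum_j v_j$ so that all $\varphi_i$ become convex, and the supremum argument showing that $h$ is Lipschitz with constant $L=\max_i\Lip(\varphi_i|_{[a,b]})$ all check out (the Lipschitz argument is a nice self-contained piece). The difficulty is that the actual heart of the proof --- what you call the final assembly --- is asserted rather than proved, and the two sentences devoted to it contain genuine gaps.

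First, ``the contributions from the open pieces $I_k$ are controlled by the induction'' does not follow: the inductive hypothesis is purely qualitative (each $h|_{I_k}$ is DC), and DC-ness of $h$ on each of infinitely many intervals separately gives no bound on $\sum_k \V(h'_+,\overline{I_k})$; a priori each summand could equal $1$. To close the induction you would need a quantitative inductive statement --- say, an explicit bound of $\V(h'_+,[a,b])$ in terms of $\sum_i\V\bigl((\varphi_i)'_+,[a,b]\bigr)$ and quantities attached to the differences $\varphi_i-\varphi_j$ --- together with a check that these quantities add up correctly over the decomposition $[a,b]=E\cup\bigcup_k I_k$. Second, the ``key observation'' about junction jumps is wrong as stated. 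At a switch point $c$ the extra contribution to the jump of $h'$ has the form $(\varphi_i-\varphi_p)'_+(c)$ evaluated at a point $c$ where $\varphi_i-\varphi_p$ vanishes; this is the \emph{value} of a one-sided derivative, not a \emph{jump} of $(\varphi_i-\varphi_p)'$, so its summability over the (possibly infinitely many) junction points cannot be deduced from ``the sum of one-sided jumps of a BV function is summable''. (The true reason such values are summable is different: between consecutive zeros of $g:=\varphi_i-\varphi_p$ the derivative $g'$ must change sign, so each $|g'_+(c)|$ is absorbed into $\V(g',[a,b])$.) Finally, the characterization you invoke presupposes that $h'_+$ exists at \emph{every} point of $[a,b)$, which is not obvious at points of $E$ accumulated by infinitely many intervals $I_k$ carrying different $\varphi_i$'s; you would need to prove this or pass to a formulation (essential variation of $h'$, or bounded second divided differences) that avoids it. In short, the skeleton is reasonable and you have correctly located where the difficulty lies, but the step that makes the lemma true is not yet a proof.
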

 We will need also the well-known fact that convex functions are semismooth (see \cite[Proposition 3]{Mi}, cf. also \cite[Proposition 2.3]{Sp}). In other words:
 \begin{lemma}\label{semismooth}
 Let $f$ be a convex function on an open convex set $C \subset \R^n$ and $x_0 \in C$. Let $0 \neq q \in \R^n$, $q_n \to q$, $t_n \searrow 0$, and $z_n \in \partial f(x_n)$, where $x_n:= x_0 +t_nq_n$, be given. Then
  $\langle q,z_n\rangle \to f_+'(x_0,q)$. In particular,
  \begin{equation}\label{diamknule}
  \diam \langle q, \partial f(x_n)\rangle \to 0.
  \end{equation}  
 \end{lemma}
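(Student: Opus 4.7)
The statement is the standard semismoothness of convex functions, so my plan is to give a direct argument based only on the elementary properties of $\partial f$ and one-sided directional derivatives. Recall that for a convex $f$ on $C$, the one-sided derivative $f'_+(x_0,q)$ exists, is continuous in $q$, and satisfies $f'_+(x_0,q)=\max\{\langle z,q\rangle:z\in\partial f(x_0)\}$. Also $f$ is locally Lipschitz on $C$, hence there is a neighbourhood $U$ of $x_0$ and a constant $L$ such that $\|z\|\le L$ for every $z\in\partial f(x)$ with $x\in U$. Since $x_n\to x_0$, the sequence $\{z_n\}$ is bounded for $n$ large.

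For the lower estimate on $\langle q,z_n\rangle$, I would exploit the defining subgradient inequality $f(x_0)\ge f(x_n)+\langle z_n,x_0-x_n\rangle$, which rewrites as
\[
\langle z_n,q_n\rangle \;\ge\; \frac{f(x_0+t_nq_n)-f(x_0)}{t_n}.
\]
Using the Lipschitz bound to replace $q_n$ by $q$ inside $f$ at cost $L\|q_n-q\|\to 0$, and the monotonicity of the difference quotient $t\mapsto(f(x_0+tq)-f(x_0))/t$ as $t\searrow 0$, the right-hand side tends to $f'_+(x_0,q)$. Since $\langle z_n,q_n\rangle-\langle z_n,q\rangle\to 0$ by boundedness of $\{z_n\}$, this gives $\liminf_n\langle z_n,q\rangle\ge f'_+(x_0,q)$.

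For the matching upper estimate I would invoke the (standard) upper semicontinuity of the subdifferential: any convergent subsequence $z_{n_k}\to z^\ast$ satisfies $z^\ast\in\partial f(x_0)$ (pass to the limit in $f(y)\ge f(x_{n_k})+\langle z_{n_k},y-x_{n_k}\rangle$ using continuity of $f$). Hence $\langle z^\ast,q\rangle\le f'_+(x_0,q)$, so $\limsup_k\langle z_{n_k},q\rangle\le f'_+(x_0,q)$. Combined with the lower bound, every convergent subsequence of $\langle z_n,q\rangle$ has limit $f'_+(x_0,q)$, and boundedness of the whole sequence yields $\langle q,z_n\rangle\to f'_+(x_0,q)$, which is the first claim.

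The diameter assertion \eqref{diamknule} then follows by contradiction: if $\diam\langle q,\partial f(x_{n_k})\rangle>\varepsilon$ along some subsequence, pick $z_{n_k},z'_{n_k}\in\partial f(x_{n_k})$ with $|\langle q,z_{n_k}-z'_{n_k}\rangle|>\varepsilon$; applying the first part to each of the sequences $\{z_{n_k}\}$ and $\{z'_{n_k}\}$ (each being a sequence of subgradients at $x_{n_k}=x_0+t_{n_k}q_{n_k}$) gives $\langle q,z_{n_k}\rangle\to f'_+(x_0,q)$ and similarly for $z'_{n_k}$, contradicting the lower bound $\varepsilon$. The only mildly delicate point is keeping the error from $q_n\ne q$ under control in the lower estimate, but the local Lipschitz property of $f$ handles this cleanly; no genuine obstacle is expected, which matches the author's remark that the lemma is well known.
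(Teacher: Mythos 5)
Your argument is correct. The paper itself offers no proof of this lemma: it is stated as a known fact with pointers to Mifflin \cite{Mi} and Spingarn \cite{Sp}, so there is no in-paper argument to compare against line by line. Your self-contained proof is the standard one and all the ingredients you use are legitimate: the subgradient inequality at $x_n$ tested against $x_0$ gives $\langle z_n,q_n\rangle\ge (f(x_0+t_nq_n)-f(x_0))/t_n$; the local Lipschitz property of a convex function on an open set lets you swap $q_n$ for $q$ at a vanishing cost and bounds $\{z_n\}$; the monotone convergence of the difference quotient yields the lower estimate; and the closedness of the subdifferential graph together with the max formula $f'_+(x_0,q)=\max\{\langle z,q\rangle: z\in\partial f(x_0)\}$ gives the matching upper estimate along convergent subsequences. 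The passage from ``every convergent subsequence of the bounded scalar sequence $\langle z_n,q\rangle$ has limit $f'_+(x_0,q)$'' to convergence of the whole sequence is sound (one extracts a further subsequence along which $z_n$ itself converges), and the deduction of \eqref{diamknule} by applying the first claim to two extremal selections of subgradients is exactly right. In short: the author buys brevity by citation; you buy self-containment at the cost of half a page, with no gap.
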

 
\section{The result and its proof}
The following result is an immediate consequence of \cite[Corollary 4.3]{CY}.
\begin{th_CY}\label{ACY}
Let $u$ be a semiconcave function on an open set $\Omega \subset \R^n$, $x_0 \in \Sigma(u)$ be a singular point of $u$ and
$$\partial D^+ u(x_0) \setminus D^* u(x_0) \neq \emptyset.$$
Then there exists $q \in \R^n$ with $\|q\|=1$, $\tau>0$, and a Lipschitz curve $\xi: [0,\tau] \to \Sigma(u)$
 such that
 \begin{enumerate}
 \item $\xi'_+(0) =q$,
 \item $\lim_{s \to 0+} \xi'_+(s) = q$, and
 \item $\inf_{s \in [0,\tau]}\ \diam\,D^+ u(\xi(s)) \, > 0$.
 \end{enumerate}
\end{th_CY}
Note that it is proved in \cite{CY} also that  $\xi'_+(s)$ exists for each $s \in [0,\tau)$ and $\xi'_+$ is right continuous on $[0,\tau)$. 
Further note that the result without (ii) was proved already in \cite{AC}.

Using Theorem CY and the method of the proof of the implicit function theorem for DC functions
 \cite[Theorem 4.4]{VeZa}, we easily prove the following result.

 \begin{theorem}\label{main}
 Let $u$ be a semiconcave function on an open set $\Omega \subset \R^2$, $x_0 \in \Sigma(u)$ be a singular point of $u$ and
$$\partial D^+ u(x_0) \setminus D^* u(x_0) \neq \emptyset.$$
Then there exist a Cartesian coordinate system in $\R^2$ given by a map $A:\R^2\to \R^2$ such that $A(x_0)=(0,0)$, and convex Lipschitz functions $y_1, y_2$ on some $[0,\alpha]$ ($\alpha>0$) such that, denoting $\psi(x):= (x, y_1(x)-y_2(x))$, $x \in [0,\alpha]$, we have $\psi(0) = (0,0)$ and $A^{-1}(\psi([0,\alpha])) \subset \Sigma(u)$.
\end{theorem}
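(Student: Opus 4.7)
My plan is to reduce the statement to showing that a single real-valued Lipschitz function $\eta$ is DC, and then to apply the mixing lemma (Lemma \ref{mix}). The bookkeeping part is: on a small ball $B(x_0,\delta)$, write $u = g + K\|\cdot\|^2$ with $g$ concave and set $f := -g$; then by \eqref{redconv}, $\Sigma(u)\cap B(x_0,\delta) = \Sigma(f)\cap B(x_0,\delta)$ and $\diam D^+u \equiv \diam\partial f$. Theorem CY then produces a Lipschitz curve $\xi : [0,\tau]\to\Sigma(f)$ with $\xi'_+(0) = q$, $\xi'_+(s)\to q$ as $s\to 0+$, and $c_0 := \inf_{s\in[0,\tau]}\diam\partial f(\xi(s)) > 0$. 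I let $A$ be the translation of $x_0$ to the origin followed by a rotation carrying $q$ to $e_1=(1,0)$, and pass to the convex function $\tilde f(z) := f(x_0+A^{-1}z)$ and the curve $\tilde\xi := A\xi$. Since $\tilde\xi'_+(s)\to e_1$, after shrinking $\tau$ to $\tau'$ the first coordinate $\tilde\xi_1$ is a bi-Lipschitz increasing bijection of $[0,\tau']$ onto some $[0,\alpha]$, and $\eta(x) := \tilde\xi_2(\tilde\xi_1^{-1}(x))$ is Lipschitz on $[0,\alpha]$ with $\eta(0)=0$ and $\graph\eta = \tilde\xi([0,\tau'])$. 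Everything reduces to showing that $\eta$ is DC, since any DC function on $[0,\alpha]$ admits a decomposition $\eta = y_1-y_2$ into convex Lipschitz pieces.

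\textbf{Mixing strategy.} To prove $\eta$ is DC, I plan to produce finitely many DC functions $\vf_1,\dots,\vf_p$ on $[0,\alpha]$ with $\eta(x)\in\{\vf_1(x),\dots,\vf_p(x)\}$ for every $x$; continuity of $\eta$ together with Lemma \ref{mix} then yields $\eta$ DC. To isolate the relevant branches of the singular set, set $c := c_0/2$ and consider $\Sigma^c := \{z\in B(0,\rho) : \diam\partial\tilde f(z)\geq c\}$. Applying Lemma \ref{semismooth} with $q = e_1$ along sequences of the form $\tilde\xi(s_n) = s_n\,q_n$ with $q_n\to e_1$ (which is legitimate because $\tilde\xi'_+(0) = e_1$) shows that $\diam\langle e_1, \partial\tilde f(\tilde\xi(s))\rangle \to 0$. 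Combined with $\diam\partial\tilde f(\tilde\xi(s))\geq c_0$, this forces the $e_2$-span of $\partial\tilde f(\tilde\xi(s))$ to be at least $c$ for all small $s$, so after further shrinking $\alpha$ and $\rho$ we have $\graph\eta\subset\Sigma^c$.

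\textbf{The main obstacle.} The real difficulty will be to show that, for $\rho$ sufficiently small, $\Sigma^c \cap B(0,\rho)$ is covered by the graphs of finitely many Lipschitz DC functions of the $x$-variable. My plan is to combine the covering of $\Sigma(\tilde f)$ by countably many Lipschitz DC curves from \cite{zajkonv} with the construction in the proof of the DC implicit function theorem \cite[Theorem 4.4]{VeZa}. For each fixed $x$, monotonicity of $\tilde f_y^+(x,\cdot)$ bounds the number of its $y$-jumps of size $\geq c$ by $2L/c$, where $L$ is a local Lipschitz constant of $\tilde f$; this fixes the number $p$. Each individual ``$y$-kink locus'' should then be representable as the graph of a DC function of $x$ by mimicking the argument of \cite[Theorem 4.4]{VeZa}, using the convex structure of the one-sided partial derivatives and the uniform semismoothness guaranteed by \eqref{diamknule} to invert them. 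Once the $\vf_i$ are in hand, $\graph\eta\subset\bigcup_i\graph\vf_i$ together with continuity of $\eta$ gives $\eta(x)\in\{\vf_1(x),\dots,\vf_p(x)\}$, Lemma \ref{mix} makes $\eta$ DC, and the required $y_1, y_2$ are extracted from any convex--concave decomposition of $\eta$ (Lipschitzness on the closed interval following from boundedness of one-sided derivatives of $\eta$).
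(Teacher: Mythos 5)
Your overall architecture coincides with the paper's: reduce to a convex function $f$ on a small ball, rotate so that $q=e_1$, use property (ii) of Theorem CY to reparametrize the singular arc as the graph of a Lipschitz function $\eta$ on $[0,\alpha]$, use Lemma \ref{semismooth} together with the lower bound on $\diam \partial f(\xi(s))$ to obtain a uniform lower bound $\delta>0$ on $\diam\langle e_2,\partial f\rangle$ along the arc, and finish by exhibiting finitely many DC selections and invoking Lemma \ref{mix}. All of these reductions are correct and are exactly Steps 1, 2 and 4 of the paper's proof.

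The gap is precisely at what you yourself label ``the main obstacle''. The whole content of the theorem is the production of the finitely many Lipschitz DC functions $\vf_1,\dots,\vf_p$, and there your text stops being a proof: you assert that each ``$y$-kink locus'' \emph{should} be representable as a DC graph ``by mimicking the argument of \cite[Theorem 4.4]{VeZa}'', but construct nothing. Worse, the route you choose --- covering all of $\Sigma^c\cap B(0,\rho)$ by finitely many DC graphs --- is strictly harder than what is needed: although each vertical line meets $\Sigma^c$ in at most $2L/c$ points, these points do not organize themselves into $p$ globally defined branches (branches merge, split, appear and disappear as $x$ varies), so ``each individual $y$-kink locus'' is not even a well-defined function of $x$ without substantial extra work. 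The paper sidesteps this entirely by extending only the restriction of $\eta$ itself: choose a partition $-L=y_0<\dots<y_p=L$ of mesh $<\delta/2$; since $\langle e_2,\partial f(x,\eta(x))\rangle$ is an interval of length at least $\delta$, for each $x$ some pair of consecutive nodes $y_{i_x-1},y_{i_x}$ lies in it; set $A_i:=\{x:\ i_x=i\}$. On $A_i$ one has $\eta=(\omega_1-\omega_2)/(y_{i-1}-y_i)$, where $\omega_1(x):=f(x,\eta(x))-y_i\eta(x)$ and $\omega_2(x):=f(x,\eta(x))-y_{i-1}\eta(x)$, and each $\omega_j$ extends to a convex Lipschitz function on $\R$ as the supremum over $x\in A_i$ of the supporting affine functions $t\mapsto\omega_j(x)+p_x(t-x)$ with $(p_x,y_{i+1-j})\in\partial f(x,\eta(x))$; the subgradient inequality for $f$ shows this supremum agrees with $\omega_j$ on $A_i$. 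This explicit construction (or a genuine substitute for it) is what your proposal is missing, and without it the proof is not complete.
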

\begin{proof}
Let $\xi: [0,\tau] \to \Sigma(u)$  and $q \in \R^2$ have properties from  Theorem CY. 
  We will proceed in four steps. In steps 1-3 we will suppose that 
  \begin{equation}\label{spec}
 x_0 = (0,0)\ \ \text{and}
 \ \ \ q= (1,0).
 \end{equation}
\smallskip

{\it Step 1}\ \  Set $e_2:= (0,1)$.
 Let $u(x)= g(x) + K \|x\|^2$ for $x \in B(x_0,\delta) \subset \Omega$, where $g$ is concave and Lipschitz with a constant $L>0$ on  $B(x_0,\delta)$. 
 Set $f:=-g$. Applying \eqref{redconv} to any point $x \in B(x_0,\delta)$, we obtain 
$D^+u(x) = -\partial f(x) + 2Kx, \ x \in B(x_0,\delta)$. So (iii) (of Theorem CY) easily implies that, for some $0 <\tau_1< \tau$,
 we have that  $f(\xi(s)) \in B(x_0,\delta)$ and $\partial f(\xi(s)) \subset B(0,L)$ for each $s \in [0,\tau_1]$, and
\begin{equation}\label{pdvel}
\inf_{s \in [0,\tau_1]}\ \diam\, \partial f(\xi(s)) \, > 0.
\end{equation}

We will show that there exists $0< \tau_2 < \tau_1$ such that
 \begin{equation}\label{diam1}
 \delta := \inf_{s \in (0,\tau_2]}\ \diam\,\langle e_2, \partial f(\xi(s))\rangle \, > 0
 \end{equation}
 Suppose on the contrary that there exits a sequence $(t_n)$ such that $t_n\searrow 0$ and 
 \begin{equation}\label{diam2}
 \lim_{n \to \infty}
  \diam\,\langle e_2, \partial f(\xi(t_n))\rangle = 0. 
  \end{equation}
   Set $q_n := \xi(t_n)/t_n$ and $x_n := \xi(t_n) = t_nq_n$. Since $q_n \to q$ by (i), Lemma \ref{semismooth}
    gives that
    \begin{equation}\label{diamx}
    \lim_{n \to \infty}
  \diam\,\langle q, \partial f(\xi(t_n))\rangle = 0. 
  \end{equation}
  Since \eqref{diam2} and \eqref{diamx} clearly imply $\lim_{n \to \infty}
  \diam\,  \partial f(\xi(t_n)) = 0$, we obtain a contradiction with  \eqref{pdvel}. 
  \medskip
  
{\it Step 2}\ \ Let $\xi = (\xi_1,\xi_2)$. By (ii), we have $\lim_{s\to 0+} (\xi_1)_+'(s) = 1$ and therefore
  there exits $0 < \tau_3 <\tau_2$ such that  $1/2 \leq (\xi_1)'(s)$ for a.e. $s \in (0,\tau_3)$. So
   $\xi_1$ is Lipschitz strictly   increasing on $[0,\tau_3]$ and $(\xi_1)^{-1}$ is Lipschitz on $[0, \alpha]$, where
    $\alpha:=\xi_1(\tau_3)$. Set  $g(x):= \xi_2 \circ (\xi_1)^{-1}(x),\ x \in [0,\alpha]$. Then 
     $g$ is Lipschitz and $\psi(x): = (x,g(x)),\  x \in [0,\alpha]$, is an equivalent parametrization of $\xi|_{[0, \tau_3]}$.
  \medskip
     
     {\it Step 3}\ \
     Choose a partition $-L=y_0 <y_1 <\dots<y_p=L\}$ of the interval $[-L,L]$ such that
      $\max\{y_{i} - y_{i-1},\ i=1,\dots,p\} < \delta/2$. For each $x \in (0,\alpha)$, the set
      $\langle e_2, \partial f(\psi(x))\rangle  \subset [-L,L]$ is a closed interval of length at least $\delta$ and so we can choose
      $i_x \in \{1,\dots,p\}$ such that
      \begin{equation}\label{oba}
      y_{i_x} \in \langle e_2, \partial f(\psi(x))\rangle\ \ \ \text{and}\ \ \ y_{i_x-1} \in \langle e_2, \partial f(\psi(x))\rangle.
      \end{equation}
      For $i\in  \{1,\dots,p\}$, set $A_i := \{x\in (0,\alpha):\ i_x = i\}$. We will show that, for each
       $i \in \{1,\dots,p\}$ with $A_i \neq \emptyset$, the function $g|_{A_i}$ can be extended to a Lipschitz DC function $\vf_i$ on $\R$.
       
       To this end, fix a such $i$ and set
       $$ \omega_1(x):= f(x,g(x))-y_i g(x)\ \ \text{and}\ \ \omega_2(x):= f(x,g(x))-y_{i-1} g(x)\ \ \text{for}\ x \in 
A_i.$$
 Since\ $\omega_1(x) - \omega_2(x) = (y_{i-1}- y_i) g(x),\ x \in A_i$, it is sufficient to prove that $\omega_i$
  ($i=1,2$) can be extended to a Lipschitz convex function $c_i$ defined on $\R$. 
  
  For each $x\in A_i$, choose $p_x \in \R$ such that $(p_x,y_i) \in \partial f (x,g(x))$ and consider the affine
   function
   $$ a_x(t) := \omega_1(x) + p_x(t-x),\ \ t \in \R.$$
   Set 
   $$c_1(t) := \sup \{a_x(t): \ x \in A_i\},\ \ t \in \R.$$
   Since $\omega_1$ is clearly bounded on $A_i$ and $|p_x| \leq L$ for $x\in A_i$, it is easy to see that
    $c_1$ is a Lipschitz convex function on $\R$. 
    
    Now consider arbitrary $x, t \in A_i,\ x\neq t$. Since $(p_x,y_i) \in \partial f (x,g(x))$, we have
    $$ f(t,g(t))-f(x,g(x)) \geq p_x(t-x) + y_i(g(t)-g(x)),$$
    and therefore
    $$\omega_1(t) = f(t,g(t))- y_i g(t) \geq f(x,g(x)) - y_i g(x) + p_x(t-x) = a_x(t).$$
    Since $a_t(t) = \omega_1(t),\ t \in A_i$, we obtain that $c_1$ extends $\omega_1$. Quite similarly
     we can find a convex Lipschitz extension $c_2$ of $\omega_2$.
     
     Since $g(x) \in \{\vf_1(x),\dots,\vf_p(x)\}$ for each $x \in (0,\alpha)$, and  $g,\ \vf_1,\dots,\vf_p$ are  continuous on $[0,\alpha]$, we can clearly find  $i_0, i_{\alpha} \in \{1,\dots,p\}$ such
      that $g(0) = \vf_{i_0}(0)$ and $g(\alpha) = \vf_{i_{\alpha}}(\alpha)$. 
      
      Let $h$ be the extension of $g$ with $h(x) = \vf_{i_0}(x),\ x<0$ and $h(x) =\vf_{i_{\alpha}}(x), x>\alpha$.
      Then $h$ is continuous on $\R$ and $h(x) \in \{\vf_1(x),\dots,\vf_p(x)\}$ for each $x \in \R$.
       Thus Lemma \ref{mix} implies that $h$ is DC on $\R$, i.e., $h= \gamma_1-\gamma_2$, where  $\gamma_1$ and $\gamma_2$ are convex on $\R$. Then  $y_i := \gamma_i|_{[0,\alpha]}$, $i=1,2$,  are clearly convex Lipschitz functions, and
        $\psi(x)= (x,y_1(x)-y_2(x)),\ x \in [0,\alpha]$.
\medskip

{\it Step 4}\ \ If \eqref{spec} does not hold, we can choose a Cartesian system of coordinates given by a map 
 $A: \R^2 \to \R^2$ such that $A(x_0) = (0,0)$ and $A(q)= (1,0)$. Applying steps 1-3 to $u^* := u \circ A^{-1}$ and
  $\xi^*:= A \circ \xi$, we obtain $\psi$ of the demanded form with $\psi([0,\alpha]) \subset \Sigma(u^*) = A(\Sigma(u))$.

      \end{proof}

      \begin{remark}\label{1}
      Well-known elementary properties of convex functions on $\R$ easily imply that the one-sided derivative
       $\psi_+'$ ($\psi_-'$) exists and is right (left) continuous on $[0,\alpha)$ ($(0,\alpha])$ and has finite variation on this interval. In other words, $\psi$ has {\it bounded convexity} (see \cite[Theorem 3.1]{VZ} or
        \cite[Lemma 5.5]{Du}). Further, since clearly $|\psi'_+|\geq 1,\ |\psi'_-|\geq 1$ we obtain that the
         curve $\psi$ has {\it finite turn} (see \cite[Theorem 5.4.2]{AR} or \cite[Theorem 5.11]{Du}). So the curve 
           $\psi^*: = A^{-1} \circ \psi$, for which $\psi^*([0,\alpha]) \subset \Sigma(u)$, 
             has also bounded convexity and finite turn.
            \end{remark}
            
         \begin{remark}\label{2}
          The proof of Theorem \ref{main} and Remark \ref{1} show that, for the curve $\xi: [0,\tau] \to \Sigma(u)$ from
           Theorem CY, there exists $0<\tau^*<\tau$ such that $\xi|_{[0,\tau^*]}$ has finite turn. In fact, this assertion ``is not weaker'' than Theorem \ref{main}, since it implies quickly by standard methods Theorem \ref{main}. 
         \end{remark}

          \begin{remark}\label{3}
          We {\it did not shown} that the curve $\xi$ from  Theorem CY has near $0$ (left-continuous) left derivative $\xi'_-$ at all points. However, the proof of Theorem \ref{main} clearly implies that $\xi$ has (left-continuous) left half-tangent on  $(0,\tau^*]$ for some $0<\tau^*<\tau$.
          \end{remark}
          
          We will not give detailed proofs of facts from Remarks \ref{1}-\ref{3}, since they would be inadequately long,  and these facts are not essential for the present short note.


\end{document}